\newtheorem{thm}{\textbf{Theorem}}[section]
\newtheorem{prop}[thm]{\textbf{Proposition}}
\newtheorem{lem}[thm]{\textbf{Lemma}}
\theoremstyle{definition}
\newtheorem{defn}[thm]{\textbf{Definition}}
\newtheorem{rem}[thm]{{\textbf Remark}}
\newcommand{\olapla}{\overline\Delta}
\newcommand{\onabla}{\overline\nabla}
\newcommand{\lapla}{\Delta}
\newcommand{\p}{\phi}
\title{Biharmonic Lagrangian submanifolds \\ in K{\em ${\ddot {\bf A}}$}hler manifolds}
\author{Shun Maeta${}^{\ast}$}
\author{Hajime Urakawa${}^{\dagger}$} 
\thanks{
${}^{\ast}$
supported by 
Research Fellowships of the Japan Society for the Promotion of Science for Young Scientists, No.~23-6949.\\
\qquad
${}^{\dagger}$
supported by the Grants-in-Aid for Scientific Research (C), 
Japan Society for the Promotion of Science, No.21540207.}
\thanks{2010~{\em Mathematics Subject Classification.}~primary 58E20, secondary 53C43.}
\begin{document}

\maketitle
\markboth{Biharmonic Lagrangian submanifolds} 
{Shun Maeta and Hajime Urakawa}



\begin{abstract}
We give necessary and sufficient conditions for a Lagrangian submanifold of a K\"ahler manifold to be biharmonic (cf. Theorem \ref{main1}).
 Furthermore, we classify biharmonic PNMC Lagrangian submanifolds in the complex space forms (cf. Theorem \ref{blhsc}).
 \end{abstract}



\vspace{10pt}
\begin{flushleft}
{\large {\bf Introduction}}
\end{flushleft}

 Theory of harmonic maps has been applied into various fields in differential geometry.
 The harmonic maps between two Riemannian manifolds are
 critical maps of the energy functional $E(\p)=\frac{1}{2}\int_M\|d\p\|^2v_g$, for smooth maps $\p:M\rightarrow N$,
 whose Euler-Lagrange equation is the {\em tension field} $\tau(\p)$ of $\p$ vanishes.
 
On the other hand, in 1983, J. Eells and L. Lemaire \cite{jell1} proposed the problem to consider the {\em polyharmonic maps of order $k$}:
 they are critical maps of the functional 
 \begin{align*}
 E_{k}(\p)=\int_Me_k(\p)v_g,\ \ (k=1,2,\dotsm),
 \end{align*}
 where $e_k(\p)=\frac{1}{2}\|(d+\delta)^k\p\|^2$ for smooth maps $\p:M\rightarrow N$, where $\delta$ is the codifferentiation.
G.Y. Jiang \cite{jg1} studied the first and second variational formulas of the bi-energy $E_2$ $(k=2)$ which is written as
\begin{equation}
E_2(\p)=\int _M|\tau(\p)|^2 v_g,
\end{equation}
and critical maps of $E_2$ are called {\em biharmonic maps}. There have been extensive studies on biharmonic maps.
 Harmonic maps are always biharmonic maps by definition. Thus, one of our center problem is to find
 non-harmonic biharmonic maps.
 
Recently, T. Sasahara classified \cite{ts1} 2-dimensional biharmonic Lagrangian submanifolds in 
the $2$-dimensional complex space forms.

In this paper, we first show the biharmonic equations for a Lagrangian submanifold $M^m$ of  a K\"ahler manifold $(N^m, J, \langle \cdot,\cdot \rangle)$
 of complex $m$ dimension (cf. Theorem $\ref{main1}$).
Next, we give necessary and sufficient conditions for biharmonic Lagrangian submanifolds in complex space forms (cf. Proposition \ref{n-sbihcsf}).
Finally, we classify biharmonic Lagrangian $H$-umbilical submanifolds of the complex space form $(N^m(4\epsilon), J,\langle \cdot,\cdot \rangle)$
 which has parallel normalized mean curvature vector field (say briefly, PNMC) (cf. \cite{absmco1},~\cite{byc6})
(cf. Definition \ref{PNMC},~Theorem $\ref{blhsc}$).

 In $\S \ref{preliminaries}$, we introduce notation and fundamental formulas of the tension field.
In $\S \ref{n-s}$,  we consider biharmonic Lagrangian submanifolds in K\"ahler manifolds and  give necessary and sufficient conditions.
In the complex space form, we give necessary and sufficient conditions in $\S \ref{CSF}$.
In $\S \ref{H-umbilical}$, we classify all the  biharmonic PNMC Lagrangian $H$-umbilical submanifolds in complex space forms.
\vspace{10pt}

\noindent 
{\bf Acknowledgements.} 
We would like to thank Professor Kazuo Akutagawa who gave to us many helpful advice and Professor Toru Sasahara who gave crucial comments to the first draft.
 The first author also would like to thank Professor Ye-Lin Ou for useful comments.

\vspace{30pt}
\section{Preliminaries}\label{preliminaries}
In this section, we give the necessary notation on biharmonic maps for later use.

Let $(M,g)$ be an $m$ dimensional compact Riemannian manifold,
 $(N,\langle \cdot,\cdot \rangle)$ an $n$ dimensional one,
 and $\p:M\rightarrow N$, a smooth map.
 We use the following notation.
 The second fundamental form of $\p$ is a covariant differentiation $\widetilde\nabla d\p$ of $1$-form $d\p$,
 which is a section of $\odot ^2T^*M\otimes \p^{-1}TN$.
For every vector fields $X,Y\in \frak{X} (M)$ on $M$, 
 \begin{equation*}
 \begin{split}
(\widetilde\nabla d\p)(X,Y)
=&(\widetilde\nabla_X d\p)(Y)=\overline\nabla_Xd\p(Y)-d\p(\nabla_X Y)\\
=&\nabla^N_{d\p(X)}d\p(Y)-d\p(\nabla_XY). 
 \end{split}
 \end{equation*}
 Here, $\nabla, \nabla^N, \overline \nabla, \widetilde \nabla$ are the connections on the bundles $TM$,
 $TN$, $\p^{-1}TN$ and $T^*M\otimes \p^{-1}TN$, respectively.
  
 If $M$ is compact,
 we consider critical maps of the energy functional
 \begin{align*}
 E(\p)=\int_M e(\p) v_g,
 \end{align*}
where $e(\p)=\frac{1}{2}\|d\p\|^2=\frac{1}{2}\sum^m_{i=1}\langle d\p(e_i),d\p(e_i)\rangle$
 is the {\em energy density} of $\p$, $\{e_i\}_{i=1}^m$ is a locally defined orthonormal frame field on $(M,g)$,
  and the inner product 
 $\langle \cdot ,\cdot \rangle$ is a Riemannian metric of $N$. 
 We also denote that $\langle \cdot, \cdot \rangle $ is an induced metric $\p^*\langle \cdot, \cdot \rangle$.
 The {\em tension \ field} $\tau(\p)$ of $\p$ is defined by
 \begin{align*}
 \tau(\p)=\sum^{m}_{i=1}(\widetilde \nabla d\p)(e_i,e_i)=\sum^m_{i=1}(\widetilde \nabla _{e_i}d\p)(e_i).
 \end{align*}
 Then, $\p$ is a {\em harmonic map} if and only if $\tau(\p)=0$.
 
 As for the bi-energy $E_2$, G. Y. Jiang \cite{jg1} showed the first and second variational formula.
  $\p$ is called {\em biharmonic maps} if {\em bitension field} $\tau_2(\p)$ vanishes, that is, 
  $$\tau_2(\p)=\olapla \tau(\p)-\sum^n_{i=1}(\tau(\p),d\p(e_i))d\p(e_i)=0,$$
  where $R^N$ is the curvature tensor field
\begin{align*}
R^N(U,V)
=\nabla^N_U \nabla^N_V - \nabla^N _V \nabla^N_U-\nabla^N_{[U,V]},
\ \ \ \ (U,V\in \frak{X} (N)),
\end{align*}
 $\olapla
=\onabla^* \onabla
=-\sum^m_{k=1}(\onabla_{e_k}\onabla_{e_k}
-\onabla_{\nabla_{e_k}e_k})$ 
the {\em rough Laplacian},
 and $\onabla$, the induced connection on the induced bundle $\p^{-1}TN$.

The Gauss formula is given by
\begin{equation}
\nabla^N_XY=d\p(\nabla _XY)+B(X,Y),\ \ \ \ X,Y\in \frak{X}(M),
\end{equation}
where $\nabla^N$, $\nabla$ is the Levi-Civita connection on $N$ and $M$ respectively, and $B$, the second fundamental form.
The Weingarten formula is given by
\begin{equation}\label{Wformula}
\nabla^N_X \xi =-A_{\xi}X+\nabla^{\perp}_X{\xi},\ \ \ X\in \frak{X}(M),\  \xi \in \Gamma(TM^{\perp}),  
\end{equation}
where $A_{\xi}$ is the shape operator for a normal vector field $\xi$ on $M$ and $\nabla^{\perp}$ stands for the normal connection of the normal bundle on $M$ in $N$.
It is well known that $B$ and $A$ are related by
\begin{equation}\label{C formula}
\langle B(X,Y), \xi \rangle=\langle A_{\xi}X,Y \rangle.
\end{equation}
The curvature tensor $R^N$ on $N$ satisfies Gauss Codazzi equation:
\begin{equation}\label{GCeq}
\begin{aligned}
R^N(X,Y)Z
&=R(X,Y)Z+A_{B(X,Z)}Y-A_{B(Y,Z)}X\\
&\ \ \ +(\nabla_X^{\perp}B)(Y,Z)-(\nabla^{\perp}_Y B)(X,Z),
\end{aligned}
\end{equation}
where
 $\nabla^{\perp}B$ is given by 
$$(\nabla^{\perp}_X B)(Y,Z)=\nabla^{\perp}_X(B(Y,Z))-B(\nabla_XY,Z)-B(Y,\nabla_XZ).$$
If $\p:(M,g)\rightarrow (N,h)$ is a biharmonic isometric immersion, then $M$ is called a {\em biharmonic submanifold}.
In this case, the tension field is obtained as follows:
$\tau(\p)=m\,{\bf H},$
where ${\bf H}$ is the harmonic mean curvature vector along $\p$.
Thus, the bitension field $\tau_2(\p)$ given as:
\begin{equation}
\tau_2(\p)=m\left\{\olapla\,{\bf H}-\sum^m_{i=1}R^N({\bf H},d\p(e_i))d\p(e_i)\right\}.
\end{equation}
Therefore, $\p$ is biharmonic if and only if
\begin{equation}
\olapla\,{\bf H}-\sum^m_{i=1}R^N({\bf H},d\p(e_i))d\p(e_i)=0.
\end{equation}

\qquad\\
\qquad\\

\section{Necessary and sufficient conditions for biharmonic Lagrangian submanifolds in K\"ahler manifolds}\label{n-s}
In this section, we give necessary and sufficient conditions for a biharmonic Lagrangian submanifold in a K\"ahler manifold. 

Let us recall a fundamental material on a Lagrangian submanifold of a K\"ahler manifold following Chen and Ogiue \cite{bycko1}.

Let $(N^m,J,\langle \cdot , \cdot \rangle)$ be a K\"ahler manifold, where $J$ is the almost complex structure and complex dimension $m$ and $\langle \cdot , \cdot \rangle$ the K\"ahler metric,
 namely 
$\langle JU,JV \rangle=\langle U,V \rangle$ and $d\Phi=0$,
 where $\Phi(U,V)=\langle U,JV\rangle,\ (U, V\in \frak{X}(N))$ is the fundamental 2-form.
 Let $(M^m,g)$ be a Lagrangian submanifold of a K\"ahler manifold $(N^m,J,\langle \cdot , \cdot \rangle)$,
  that is, for all $x \in M$, $J(T_x(M))\subset T_xM^{\perp}_x$, where we also denote that $J$ is the almost complex structure on $M$, $T_x(M)$ denotes the tangent space of $M$
  at $x$ and $T_xM^{\perp}$ the normal space at $x$. 
 Then, it is well known that the following equations hold:
 \begin{align}
 \nabla^{\perp}_X JY=J(\nabla_X Y),
 \end{align}
\begin{equation}\label{R^N1}
R^N(JX,JY)=R^N(X,Y),
\end{equation}
for vector fields $X,~Y \in \frak{X}(M)$, and 
\begin{equation}\label{R^N2}
R^N(U,V)\cdot J= J\cdot R^N(U,V),
\end{equation}
for vector fileds $U,~V \in \frak{X}(N)$.

To show the biharmonic equations for a Lagrangian submanifold of a K\"ahler manifold, we need the following lemma.

\vspace{5pt}

\begin{lem}\label{bihlag}
Let $\p:(M,g)\rightarrow(N,\langle \cdot , \cdot \rangle)$ be an isometric immersion between Riemannian manifolds $(M,g)$ and $(N,\langle \cdot , \cdot \rangle)$. Then it is biharmonic if and only if 
\begin{align}
{\rm trace}_g\left(\nabla A_{\bf H}\right)+{\rm trace}_g\left(A_{\nabla^{\perp}_{\cdot}{\bf H}}(\cdot)\right)-\left(\sum_{i=1}^mR^N({\bf H},e_i)e_i\right)^{T}=0,\\
\lapla^{\perp}{\bf H}+{\rm trace}_g B(A_{\bf H}(\cdot),\cdot)-\left(\sum_{i=1}^mR^N({\bf H},e_i)e_i\right)^{\perp}=0,
\end{align}
where $(\cdot)^T$ is the tangential part and $(\cdot)^{\perp}$ the normal part.
\end{lem}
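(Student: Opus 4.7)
The plan is to expand $\overline{\Delta}\,\mathbf{H}$ using the Gauss and Weingarten formulas \eqref{Wformula} so that it decomposes into a tangential and a normal part along $M$, and then separately match each component against the stated identities after subtracting the corresponding component of the curvature term $\sum_i R^N(\mathbf{H},e_i)e_i$. The starting point is the biharmonic condition recalled just before the lemma:
\begin{equation*}
\overline{\Delta}\,\mathbf{H}-\sum_{i=1}^m R^N(\mathbf{H},e_i)e_i=0,
\end{equation*}
written in terms of a local orthonormal frame $\{e_i\}$ on $M$ with $\nabla_{e_k}e_k=0$ at the point under consideration (to simplify bookkeeping; the resulting tensorial identities hold in general).

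First, I would treat $\mathbf{H}$ as a normal vector field along $\varphi$ and use Weingarten \eqref{Wformula} to write
\begin{equation*}
\overline{\nabla}_{e_k}\mathbf{H}=\nabla^{N}_{e_k}\mathbf{H}=-A_{\mathbf{H}}e_k+\nabla^{\perp}_{e_k}\mathbf{H}.
\end{equation*}
Then I would apply $\overline{\nabla}_{e_k}=\nabla^{N}_{e_k}$ once more, handling the two summands separately: to the tangential term $-A_{\mathbf{H}}e_k$ I would apply the Gauss formula, producing $-\nabla_{e_k}(A_{\mathbf{H}}e_k)-B(e_k,A_{\mathbf{H}}e_k)$; to the normal term $\nabla^{\perp}_{e_k}\mathbf{H}$ I would apply Weingarten again, producing $-A_{\nabla^{\perp}_{e_k}\mathbf{H}}e_k+\nabla^{\perp}_{e_k}\nabla^{\perp}_{e_k}\mathbf{H}$. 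Summing over $k$, subtracting the $\overline{\nabla}_{\nabla_{e_k}e_k}\mathbf{H}$ correction, and using the definition of the rough Laplacian yields
\begin{equation*}
\overline{\Delta}\,\mathbf{H}=\bigl\{\mathrm{trace}_g(\nabla A_{\mathbf{H}})+\mathrm{trace}_g\bigl(A_{\nabla^{\perp}_{\cdot}\mathbf{H}}(\cdot)\bigr)\bigr\}+\bigl\{\Delta^{\perp}\mathbf{H}+\mathrm{trace}_g\,B(A_{\mathbf{H}}(\cdot),\cdot)\bigr\},
\end{equation*}
where the first brace is tangent to $M$ and the second is normal to $M$; the tangential piece encodes $\nabla_{e_k}(A_{\mathbf{H}}e_k)-A_{\mathbf{H}}(\nabla_{e_k}e_k)$ and $-(-A_{\nabla^{\perp}_{e_k}\mathbf{H}}e_k)$, while the normal piece collects $-B(e_k,A_{\mathbf{H}}e_k)$ with the wrong sign eliminated by the overall sign of $\overline{\Delta}$, together with the normal connection Laplacian $\Delta^{\perp}\mathbf{H}=-\sum_k(\nabla^{\perp}_{e_k}\nabla^{\perp}_{e_k}-\nabla^{\perp}_{\nabla_{e_k}e_k})\mathbf{H}$.

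Finally, I would split $\sum_i R^N(\mathbf{H},e_i)e_i$ into its tangential and normal components $(\cdot)^T$ and $(\cdot)^{\perp}$ and observe that the biharmonic equation holds if and only if both the tangential and normal parts of $\overline{\Delta}\,\mathbf{H}-\sum_i R^N(\mathbf{H},e_i)e_i$ vanish separately, which are exactly the two displayed identities. The only real obstacle is the careful bookkeeping of signs (particularly the overall minus sign in $\overline{\Delta}=-\sum_k(\overline{\nabla}_{e_k}\overline{\nabla}_{e_k}-\overline{\nabla}_{\nabla_{e_k}e_k})$) and of which terms coming out of Gauss/Weingarten are tangent and which are normal; once these are organized consistently, the identification with $\mathrm{trace}_g(\nabla A_{\mathbf{H}})$, $\mathrm{trace}_g(A_{\nabla^{\perp}_{\cdot}\mathbf{H}}(\cdot))$, $\Delta^{\perp}\mathbf{H}$ and $\mathrm{trace}_g B(A_{\mathbf{H}}(\cdot),\cdot)$ is immediate from the definitions.
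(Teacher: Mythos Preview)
Your proposal is correct and follows essentially the same route as the paper: apply Weingarten to $\mathbf{H}$, differentiate once more using Gauss on the tangential piece and Weingarten on the normal piece, assemble $\overline{\Delta}\,\mathbf{H}$, and then split both $\overline{\Delta}\,\mathbf{H}$ and the curvature term into tangential and normal parts. The only cosmetic difference is that you work at a point with $\nabla_{e_k}e_k=0$, whereas the paper keeps the $\nabla_{e_k}e_k$ terms explicit throughout; the resulting identification is identical.
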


\vspace{5pt}

\begin{proof}
Due to $(\ref{Wformula})$, we have
\begin{align*}
\onabla_X{\bf H}
&=-A_{\bf H}(X)+\nabla^{\perp}_X{\bf H},
\end{align*}
\begin{align*}
\onabla_Y\onabla_X {\bf H}
&=-\nabla_YA_{\bf H}(X)-B(Y,A_{\bf H}(X))
+\nabla^{\perp}_Y\nabla^{\perp}_X{\bf H} -A_{\nabla^{\perp}_X {\bf H}}Y,
\end{align*}
for all vector field, $X,\,Y$ on $M$.

Thus, we have
\begin{align*}
\olapla\,{\bf H}
=&-\sum^m_{i=1}\left\{-\nabla_{e_i}A_{\bf H}(e_i)-B(e_i,A_{\bf H}(e_i))
+\nabla^{\perp}_{e_i}\nabla^{\perp}_{e_i}{\bf H}\right.\\
&\left.\hspace{50pt}-A_{\nabla^{\perp}_{e_i}{\bf H}}(e_i)
+A_{\bf H}(\nabla_{e_i}e_i)-\nabla^{\perp}_{\nabla_{e_i}e_i}{\bf H} \right\} .
\end{align*}
Dividing this into the tangential and normal part, we obtain Lemma $\ref{bihlag}$.
\end{proof}

 Then we obtain the following theorem.
\vspace{5pt}

\begin{thm}\label{main1}
Let $(N^m,J,\langle \cdot , \cdot \rangle)$ be a K\"ahler manifold of complex dimension $m$,
$(M^m,g)$ a Lagrangian submanifold in $(N^m,J,\langle \cdot , \cdot \rangle)$.
Then for an isometric immersion $\p$ from $(M^m,g)$ into  $(N^m,J,\langle \cdot , \cdot \rangle)$, it is biharmonic if and only if
 the following two equations hold:
\begin{equation}
\begin{aligned}
&{\rm trace}_g\left(\nabla A_{\bf H}\right)+{\rm trace}_g\left(A_{\nabla^{\perp}_{\cdot}{\bf H}}(\cdot)\right)\\
&-\sum^m_{i=1}\left \langle {\rm trace}_g
\left(\nabla^{\perp}_{e_i}B\right)-{\rm trace}_g\left(\nabla^{\perp}_{\cdot}B\right)(e_i,\cdot),{\bf H}\right\rangle e_i
=0,
\end{aligned}
\end{equation}

\begin{equation}
\begin{aligned}
&\lapla^{\perp}{\bf H}+{\rm trace}_g B\left(A_{\bf H}(\cdot),\cdot\right)
+\sum_{i=1}^m Ric^N(J{\bf H},e_i)Je_i\\
&-\sum_{i=1}^m Ric(J{\bf H},e_i)Je_i
-J~{\rm trace}_g A_{B(J{\bf H},\cdot)}(\cdot)+mJ A_{\bf H}(J {\bf H})
=0,
\end{aligned}
\end{equation}
where $Ric$ and $Ric^N$ are the Ricci tensor of $(M^m,g)$ for the Riemannian metric $g$ on $M^m$
induced from $\p$ and $(N^m,\langle \cdot , \cdot \rangle)$, respectively. 
 Here, ${\rm trace}_g\left(A_{\nabla_{\bullet}^{\perp}{\bf H}}(\bullet)\right)$ stand for 
 $\displaystyle \sum^m_{i=1}A_{\nabla_{e_i}^{\perp}{\bf H}}(e_i)$, and so on.
\end{thm}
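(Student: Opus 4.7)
The plan is to start from Lemma \ref{bihlag} and to rewrite the tangential and the normal components of $\sum_{i=1}^{m}R^N({\bf H},e_i)e_i$ in turn. Since $(M,g)$ is Lagrangian, $J$ is an isometry between $TM$ and $TM^{\perp}$, so $\{Je_1,\ldots,Je_m\}$ is a local orthonormal frame of the normal bundle, and any ambient vector splits by pairing against the $e_k$'s and the $Je_k$'s separately; this is the only structural feature we will use.

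For the tangential part, the K\"ahler structure plays no role. The curvature symmetry $\langle R^N(X,Y)Z,W\rangle=\langle R^N(Z,W)X,Y\rangle$ followed by skew-symmetry in the last two slots gives
\begin{equation*}
\langle R^N({\bf H},e_i)e_i,e_j\rangle=-\langle R^N(e_i,e_j)e_i,{\bf H}\rangle.
\end{equation*}
All three arguments on the right are tangent to $M$, so the Gauss-Codazzi equation \eqref{GCeq} applies; only the normal summands survive the pairing with ${\bf H}$, leaving $\langle(\nabla^{\perp}_{e_j}B)(e_i,e_i)-(\nabla^{\perp}_{e_i}B)(e_j,e_i),{\bf H}\rangle$. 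Summing over $i$ and collecting the $e_j$-coefficients reproduces the tangential equation in the theorem.

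For the normal part, the main step is to convert $R^N({\bf H},e_i)e_i$ into a curvature expression whose arguments are all tangent. Applying \eqref{R^N1} gives $R^N({\bf H},e_i)=R^N(J{\bf H},Je_i)$, and then \eqref{R^N2} together with $J^2=-I$ pulls $J$ out of the third slot, producing the intermediate identity
\begin{equation*}
R^N({\bf H},e_i)e_i=-J\,R^N(J{\bf H},Je_i)Je_i.
\end{equation*}
Because $J$ exchanges $TM$ and $TM^{\perp}$, the normal component of $\sum_i R^N({\bf H},e_i)e_i$ equals $-J$ applied to the \emph{tangential} component of $W:=\sum_i R^N(J{\bf H},Je_i)Je_i$. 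To extract $\langle W,e_k\rangle$, use
\begin{equation*}
Ric^N(J{\bf H},e_k)=\sum_{i}\langle R^N(J{\bf H},e_i)e_i,e_k\rangle+\sum_{i}\langle R^N(J{\bf H},Je_i)Je_i,e_k\rangle,
\end{equation*}
which expresses $Ric^N$ relative to the full orthonormal frame $\{e_i,Je_i\}$ of $TN$. The all-tangent sum on the right is then treated by the Gauss equation \eqref{GCeq}; together with $\sum_iB(e_i,e_i)=m{\bf H}$ this yields precisely $Ric(J{\bf H},e_k)+\sum_i\langle A_{B(J{\bf H},e_i)}e_i,e_k\rangle-m\langle A_{\bf H}(J{\bf H}),e_k\rangle$. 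A final application of $-J$ turns each $e_k$ into $-Je_k$, regroups the shape-operator contributions into $J\,{\rm trace}_gA_{B(J{\bf H},\cdot)}(\cdot)$ and $mJA_{\bf H}(J{\bf H})$, and produces the four extra summands in the normal equation.

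The main obstacle will be the bookkeeping in the normal part: combining \eqref{R^N1}, \eqref{R^N2} with the Riemannian curvature symmetries so that signs and $J$'s line up, and recognising that, after $-J$ is applied to $W^{T}$, the shape-operator contributions reassemble into their $J$-trace form. Once the intermediate identity $R^N({\bf H},e_i)e_i=-JR^N(J{\bf H},Je_i)Je_i$ is secured, the remainder is a structured expansion via Gauss and the Ricci decomposition along $\{e_i,Je_i\}$.
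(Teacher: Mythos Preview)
Your proposal is correct and follows essentially the same route as the paper: both compute the tangential component of $\sum_i R^N({\bf H},e_i)e_i$ directly via the Codazzi equation, and both handle the normal component by using the K\"ahler identities \eqref{R^N1}--\eqref{R^N2} to pass to curvature expressions with tangent arguments, then combining the Ricci decomposition along $\{e_i,Je_i\}$ with the Gauss equation. The only cosmetic difference is that the paper writes ${\bf H}=JZ$ and works with inner products $\langle R^N({\bf H},e_i)e_i,JX\rangle=\langle R^N(Z,Je_i)Je_i,X\rangle$, while you package the same computation as the operator identity $R^N({\bf H},e_i)e_i=-J\,R^N(J{\bf H},Je_i)Je_i$ before projecting.
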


\begin{proof}
The harmonic mean curvature vector ${\bf H}$ can be described as ${\bf H}=J Z$ for a vector field $Z$ on $M$.
 Using $(\ref{R^N1})$ and $(\ref{R^N2})$,  we obtain
\begin{equation*}
\sum^m_{i=1}\left\langle R^N({\bf H},e_i)e_i, JX\right\rangle
=\sum^m_{i=1}\left\langle R^N(Z, Je_i)Je_i,X\right\rangle,
 \end{equation*}
which implies that
\begin{align*}
\sum_{i=1}^m\left\langle R^N({\bf H},e_i)e_i,JX\right\rangle
+\sum_{i=1}^m \left\langle R^N(Z,e_i)e_i,X\right\rangle 
=Ric^N(Z,X).
\end{align*}
By Gauss Codazzi equation $(\ref{GCeq})$, we have
\begin{align}\label{5-5}
&\sum_{i=1}^m\left\langle R^N({\bf H},e_i)e_i,JX\right\rangle \notag\\
=&Ric^N(Z,X)\notag\\
&-\left\{\sum_{i=1}^m \left\langle R(Z,e_i)e_i,X\right\rangle
+\sum_{i=1}^m \left\langle B(Z,e_i),B(e_i,X)\right\rangle
-\sum^m_{i=1}\left\langle B(e_i,e_i),B(Z,X)\right\rangle\right\} \\
=&Ric^N(Z,X)-Ric(Z,X)
-\sum_{i=1}^m\langle B(Z,e_i),B(e_i,X)\rangle
+m\langle{\bf H},B(J{\bf H},X)\rangle\notag\\
=&-Ric^N(J{\bf H},X)+Ric(J{\bf H},X)\notag\\
&+\sum_{i=1}^m\langle B(J{\bf H},e_i),B(e_i,X)\rangle
-m\langle{\bf H},B(J{\bf H}, X)\rangle.\notag
\end{align}
From $(\ref{5-5})$, we have
\begin{align}\label{5-6}
&\left(\sum_{i=1}^m R^N({\bf H},e_i)e_i\right)^{\perp}\notag\\
=&\sum_{j=1}^m\left\langle \sum_{i=1}^m R^N({\bf H}, e_i)e_i,Je_j\right\rangle Je_j\\
=&-\sum_{j=1}^mRic^N(J{\bf H},e_j)Je_j
+\sum_{j=1}^mRic (J{\bf H},e_j)Je_j\notag\\
&+\sum_{i,j=1}^m \langle B(J{\bf H},e_i),B(e_i,e_j)\rangle Je_j
-\sum_{j=1}^m m\langle{\bf H},B(J{\bf H},e_j)\rangle J e_j.\notag
\end{align}

By $(\ref{C formula})$, 
\begin{align}\label{6-7}
\sum^m_{i,j=1}\langle B(J{\bf H},e_i),B(e_i,e_j)\rangle Je_j
=&\sum^m_{i,j=1}\left\langle A_{B(J{\bf H},e_i)}(e_i),e_j\right\rangle Je_j\notag\\
=&\sum^m_{i,j=1}\left\langle J A_{B(J{\bf H},e_i)}(e_i),Je_j\right\rangle Je_j\\
=&\sum^m_{i=1}J A_{B(J {\bf H},e_i)}(e_i)\notag\\
=&~J~ {\rm trace}_g A_{B(J{\bf H},\cdot)}(\cdot),\notag
\end{align}
 and 
 
\begin{align}\label{6-8}
\sum^m_{j=1}\langle{\bf H},B(J{\bf H},e_j)\rangle Je_j
=&\sum^m_{j=1}\langle A_{\bf H}(J{\bf H}),e_j\rangle Je_j\notag\\
=&\sum^m_{j=1}\langle JA_{\bf H}(J{\bf H}),Je_j\rangle Je_j\\
=&~J A_{\bf H}(J{\bf H}).\notag
\end{align}

Combining $(\ref{5-6})$, $(\ref{6-7})$ and $(\ref{6-8})$, we obtain
\begin{align}\label{6-9}
\left(\sum_{i=1}^m R^N({\bf H},e_i)e_i\right)^{\perp}
=&-\sum_{j=1}^mRic^N(J{\bf H},e_j)Je_j
+\sum_{j=1}^mRic (J{\bf H},e_j)Je_j\\
&+J~ {\rm trace}_g A_{B(J{\bf H},\cdot)}(\cdot)
-mJ A_{\bf H}(J{\bf H})\notag.
\end{align}

By $(\ref{GCeq})$,
\begin{align}
\left(\sum^m_{i=1}R^N({\bf H},e_i)e_i\right)^{T}
=&\sum^m_{i,j=1}\left\langle R^N({\bf H},e_i)e_i,e_j\right\rangle e_j\\
=&\sum^m_{i,j=1}\left\langle \left(\nabla^{\perp}_{e_j}B\right)(e_i,e_i)
-\left(\nabla^{\perp}_{e_i}B\right)(e_j,e_i),{\bf H}\right\rangle e_j.\notag
\end{align}
Applying Lemma $\ref{bihlag}$, we obtain the theorem.
\end{proof}

\qquad\\
\qquad\\

\section{Biharmonic Lagrangian submanifolds in complex space forms}\label{CSF}
In this section, we give necessary and sufficient conditions to be biharmonic for Lagrangian submanifolds in K\"ahler manifolds.
 
 Let $N=N^m(4\epsilon)$ be the complex space form of complex dimension $m$ and constant holomorphic sectional curvature $4\epsilon$.
The curvature tensor $R^N$ is given by
\begin{align*}
&R^N(U,V)W\\
&=\epsilon\{\langle V,W \rangle U-\langle U,W \rangle V+\langle W,JV\rangle JU
-\langle W,JU \rangle JV+2\langle U,JV \rangle JW\},\notag
\end{align*}
for vector fields $U,~V,~W \in \frak{X}(N)$, where $\langle \cdot, \cdot \rangle$ is the Riemannian metric on $N^m(4\epsilon)$ and $J$, the almost complex structure of $N^m(4\epsilon)$.

Every simply connected complex space form is holomorphically isometric to either the complex projective space $\mathbb{CP}^m(4\epsilon)$,
 the complex Euclidean space $\mathbb{C}^m$ or the complex hyperbolic space $\mathbb{CH}^m(4\epsilon)$ according to $\epsilon >0, \epsilon =0, \epsilon <0$.

\vspace{5pt}

Using Lemma $\ref{bihlag}$, we obtain the following proposition which will be used in the next section.
\begin{prop}\label{n-sbihcsf}
Let $(N^m(4\epsilon),J,\langle \cdot , \cdot \rangle)$ be a complex space form of complex dimension $m$,
$(M^m,g)$ a Lagrangian submanifold in $(N^m(4\epsilon),J,\langle \cdot , \cdot \rangle)$.
Then for an isometric immersion $\p$ from $M^m$ into $(N^n(4\epsilon),J,\langle \cdot , \cdot \rangle)$, it is biharmonic if and only if
\begin{align}
{\rm trace}_g\left(\nabla A_{\bf H}\right)+{\rm trace}_g\left(A_{\nabla^{\perp}_{\cdot}}(\cdot)\right)=0,\label{tan}\\
\lapla^{\perp}{\bf H}+{\rm trace}_g B\left(A_{\bf H}(\cdot),\cdot\right)-(m+3)\epsilon {\bf H}=0.\label{nor}
\end{align}
\end{prop}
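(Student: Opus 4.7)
The plan is to specialize Lemma \ref{bihlag} to the complex space form setting by computing $\sum_{i=1}^m R^N({\bf H}, e_i) e_i$ explicitly using the given formula for the curvature tensor, and then splitting it into its tangential and normal parts relative to $M$.

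First, I would substitute $U = {\bf H}$, $V = e_i$, $W = e_i$ into the curvature formula for $N^m(4\epsilon)$, obtaining
\begin{equation*}
R^N({\bf H},e_i)e_i
=\epsilon\bigl\{{\bf H}-\langle {\bf H},e_i\rangle e_i+\langle e_i,Je_i\rangle J{\bf H}
-\langle e_i,J{\bf H}\rangle Je_i+2\langle {\bf H},Je_i\rangle Je_i\bigr\}.
\end{equation*}
Three immediate simplifications follow from the Lagrangian hypothesis and the normality of ${\bf H}$: since $e_i \in TM$ and ${\bf H} \in TM^{\perp}$ we have $\langle {\bf H},e_i\rangle = 0$; since $Je_i \in TM^{\perp}$ we have $\langle e_i, Je_i\rangle = 0$; and since $J{\bf H} \in TM$ is tangent we may expand $J{\bf H} = \sum_i \langle J{\bf H},e_i\rangle e_i$.

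Next, I would use the skew-symmetry $\langle U,JV\rangle = -\langle JU,V\rangle$ from the Kähler metric together with $J^2 = -I$ to evaluate the remaining two sums. The identity $\sum_i \langle J{\bf H}, e_i\rangle\, Je_i = J(J{\bf H}) = -{\bf H}$ gives
\begin{equation*}
-\sum_{i=1}^m \langle e_i, J{\bf H}\rangle\, Je_i = {\bf H},
\qquad
2\sum_{i=1}^m \langle {\bf H}, Je_i\rangle\, Je_i = -2\sum_{i=1}^m \langle J{\bf H}, e_i\rangle\, Je_i = 2{\bf H}.
\end{equation*}
Summing everything (including the $m{\bf H}$ from the first term), one obtains the clean identity
\begin{equation*}
\sum_{i=1}^m R^N({\bf H},e_i)e_i = (m+3)\epsilon\,{\bf H},
\end{equation*}
which is entirely normal. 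Consequently its tangential part is zero and its normal part is $(m+3)\epsilon {\bf H}$.

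Finally, plugging these two facts into the general tangential and normal biharmonic equations of Lemma \ref{bihlag} produces precisely the equations (\ref{tan}) and (\ref{nor}) asserted in the proposition. The only delicate point is the bookkeeping with the sign conventions for the skew-symmetry of $J$ and for the Lagrangian decomposition $TN|_M = TM \oplus J(TM)$; once those are handled consistently the cancellations forcing the tangential curvature contribution to vanish are immediate, so I do not anticipate a substantive obstacle.
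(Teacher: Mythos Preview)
Your proposal is correct and follows essentially the same argument as the paper: both compute $\sum_i R^N({\bf H},e_i)e_i$ directly from the curvature tensor of $N^m(4\epsilon)$, use the Lagrangian condition to kill the tangential terms and reduce the remaining ones to $(m+3)\epsilon\,{\bf H}$, and then invoke Lemma~\ref{bihlag}. The only cosmetic difference is that the paper writes the surviving normal term as $\sum_i\langle {\bf H},Je_i\rangle Je_i={\bf H}$ via the orthonormal normal frame $\{Je_i\}$, whereas you route it through $J(J{\bf H})=-{\bf H}$; the content is identical.
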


\vspace{5pt}

\begin{proof}
\begin{align*}
\sum^m_{i=1}R^N({\bf H},d\p(e_i))d\p(e_i)
=&\epsilon \sum^m_{i=1} \left\{\langle d\p(e_i),d\p(e_i) \rangle {\bf H}-\langle d\p(e_i),{\bf H}\rangle d\p(e_i)\right.\\
&\hspace{25pt}+\langle d\p(e_i),J d\p(e_i)\rangle J{\bf H}
-\langle d\p(e_i),J{\bf H} \rangle J d\p(e_i)\notag\\
&\hspace{25pt}\left.+2\langle {\bf H},J d\p(e_i)\rangle J \p(e_i)\right\}\notag\\
=&\left\{m{\bf H}+\sum^m_{i=1} \langle {\bf H}, Jd\p(e_i)\rangle Jd\p(e_i)
+2{\bf H}\right\}\notag\\
=&(m+3)\epsilon {\bf H}.\notag
\end{align*}
Using this and Lemma $\ref{bihlag}$, we have the proposition.
\end{proof}

\qquad\\
\qquad\\

\section{Biharmonic Lagrangian $H$-umbilical submanifolds in complex space forms}\label{H-umbilical}
In this section, we classify biharmonic PNMC Lagrangian $H$-umbilical submanifolds in complex space forms.
 First, we recall several notions.
 
B. Y. Chen introduced the notion of Lagrangian $H$-umbilical submanifolds \cite{byc4}:

\begin{defn}[\cite{byc4}]
If a Lagrangian submanifold in a K\"ahler manifold has the second fundamental form takes the following:
\begin{equation}\label{4.1}
\left\{
\begin{aligned}
&B(e_1,e_1)=\lambda Je_1,\ \  B(e_i,e_i)=\mu Je_1,\\
&B(e_1,e_i)=\mu Je_i,\ \   B(e_i,e_j)=0, \ (i\neq j),\ \ \ i,j=2,\cdots ,m,
\end{aligned}
\right.
\end{equation}
for suitable function $\lambda, \mu$ with respect to some a suitable orthonormal frame field $\{e_1,\cdots, e_m\}$ on $M$, then it is called to be a {\em Lagrangian $H$-umbilical submanifold}.
\end{defn}
Lagrangian $H$-umbilical submanifolds are the simplest Lagrangian submanifolds next to totally geodesic ones.
Because it is known that there are no totally umbilical Lagrangian submanifolds in a complex space form $N^m(4\epsilon)$ with $m\geq2$,
 we should consider $H$-umbilical Lagrangian submanifolds.
 
In this case,  the harmonic mean curvature vector ${\bf H}$ can be denoted by
$${\bf H}=\frac{\lambda+(m-1)\mu}{m}Je_1.$$
 Hereinafter, we put $a=\frac{\lambda+(m-1)\mu}{m}.$
 \begin{rem}
 The class of Lagrangian $H$-umbilical submanifolds of the complex space forms includes the following interesting submanifolds:\\
 (1) the Whitney's sphere in the complex Euclidean space (\cite{byc3}), \\
 (2) twistor holomorphic Lagrangian surfaces in the complex projective plane(\cite{icfu1},~\cite{byc4}).\\
Furthermore, Lagrangian $H$-umbilical submanifolds in complex space forms
 were classified (\cite{byc3},~\cite{byc4},~\cite{byc5}).
 \end{rem}
 
 \vspace{5pt}

B.Y. Chen also introduced PNMC submanifolds (cf. \cite{absmco1},~\cite{byc6}):
\begin{defn}[\cite{absmco1}, \cite{byc6}]\label{PNMC}
A submanifold $M$ in a Riemannian manifold is said to have {\em parallel normalized
mean curvature vector field} (PNMC) if it has nowhere zero mean curvature and the
unit vector field in the direction of the mean curvature vector field is parallel in the
normal bundle, i.e.
\begin{equation}
\nabla^{\perp} \left(\frac{\bf H}{|\bf H|}\right) = 0.
\end{equation}
\end{defn}

We recall the Legendrian immersion and its fundamental formulas (cf. \cite{ts3},~\cite{hr1}):

 An isometric immersion $\tilde{f}:M^m\rightarrow S^{2m+1}$ is called {\em Legendrean}
 if $\sqrt{-1}\tilde{f}\perp\tilde{f}_{\*}(TM)$ and  $\langle \Phi \tilde{f}_*(TM), \tilde{f}_*(TM)\rangle =0,$ where, $\Phi$ denote the projection of the complex structure $J$ on the tangent bundle of $S^{2m+1}$, and
  $\langle \cdot,\cdot\rangle$ the inner product on $\mathbb{C}^{m+1}$ induced from $g_0$.

Let $f:M^m\rightarrow \mathbb{CP}^m$ be a Lagrangian immersion. Then, there exists a Legendrian immersion $\tilde{f}: M^m\rightarrow S^{2m+1}$ such that $f=\pi \circ \tilde{f}$.

Let $D$ be the Levi-Civita connection of $\mathbb{C}^{m+1}$ and $\tilde{B}$ the second fundamental form of $M^m$ in $S^{2m+1}$.
Then we have $\pi_*\tilde{B}=B$ and 
\begin{equation}\label{Leg}
D_XY=\nabla_XY+\tilde{B}(X,Y)-\langle X,Y\rangle \tilde{f}.
\end{equation}

We denote as $\nabla_{e_i}e_j=\sum^m_{l=1}\omega_{j}^l(e_i)e_l$ $(i,j=1,\cdots,m)$.
 Then we have

\vspace{5pt}

\begin{lem}[\cite{byc4},\cite{ts2}]\label{lem2}
Let $M^m$ be an $m-$dimensional Lagrangian $H$-umbilical submanifold in a complex space form. 
 For an orthonormal frame field $\{e_i\}_{i=1}^m$, we have
 \begin{align}
 &e_j\lambda =(2\mu -\lambda)\omega^1_{j}(e_1),\ \ \ j>1,\label{4.2}\\
 &e_1\mu=(\lambda-2\mu)\omega^l_1(e_l),\ \ \ \text{for all}\  l=2,\cdots m,\label{4.3}\\
 &(\lambda-2\mu)\omega^i_1(e_j)=0,\ \ \ i\neq j>1,\label{4.4}\\
 &e_j\mu=0,\ \ \ j>1,\label{4.5}\\
 &\mu\omega^j_1(e_1)=0\label{4.6}\\
 &\mu \omega^2_1(e_2)=\cdots =\mu \omega^m_1(e_m),\label{4.7}\\
 &\mu \omega^i_1(e_j)=0,\ \ \ i\neq j>1.\label{4.8}
 \end{align}
\end{lem}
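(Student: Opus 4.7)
The plan is to derive all seven identities from the Codazzi equation. In the complex space form $N^m(4\epsilon)$, whenever $X,Y,Z$ are tangent to the Lagrangian submanifold $M$ the normal part of $R^N(X,Y)Z$ vanishes: indeed $JX$, $JY$, $JZ$ are all normal, so of the five terms in $R^N$ only $\epsilon\{\langle Y,Z\rangle X-\langle X,Z\rangle Y\}$ survives, and it is tangent. Hence the normal projection of (\ref{GCeq}) reduces to
\[
(\nabla^{\perp}_X B)(Y,Z)=(\nabla^{\perp}_Y B)(X,Z).
\]

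Next I would expand both sides using (\ref{4.1}), the Lagrangian identity $\nabla^{\perp}_X(JY)=J(\nabla_X Y)$, and the connection forms $\nabla_{e_i}e_j=\sum_l\omega^l_j(e_i)e_l$. Concretely, for any triple $(e_a,e_b,e_c)$ the expression
\[
(\nabla^{\perp}_{e_a}B)(e_b,e_c)=\nabla^{\perp}_{e_a}(B(e_b,e_c))-B(\nabla_{e_a}e_b,e_c)-B(e_b,\nabla_{e_a}e_c)
\]
unfolds into a linear combination of the orthonormal normal vectors $Je_1,\ldots,Je_m$ whose coefficients are linear in $e_a\lambda$, $e_a\mu$, $\omega^l_k(e_a)$, $\lambda$ and $\mu$. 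Equating the $Je_l$ components on either side of the Codazzi identity then produces a scalar relation.

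I would then cycle through a short list of triples. The triple $(e_j,e_1,e_1)$ with $j>1$ produces (\ref{4.2}) from the $Je_1$ component. The triple $(e_1,e_j,e_j)$ with $j>1$ produces (\ref{4.3}) from the $Je_1$ component and (\ref{4.6}) by isolating the $Je_j$ component, since $\mu\,\omega^j_1(e_1)$ appears on only one side. The triple $(e_j,e_k,e_k)$ with $j\neq k>1$ gives (\ref{4.5}) (the $Je_1$ component shows $e_j\mu=0$) and, after comparing the relations obtained for different $k$, the chain (\ref{4.7}). Finally the triples $(e_i,e_1,e_j)$ and $(e_j,e_i,e_k)$ with distinct $i,j,k>1$ control the mixed off-diagonal terms and yield (\ref{4.4}) and (\ref{4.8}).

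The principal obstacle is the combinatorial bookkeeping: every Codazzi equation produces one scalar identity per normal direction $Je_l$, many of these overlap, and only a careful choice of triples together with the singled-out component isolates each identity without redundancy. A second subtlety is that (\ref{4.6})--(\ref{4.8}) carry the factor $\mu$ rather than $(\lambda-2\mu)$, signalling that they arise from components in which the off-diagonal $\mu\,Je_k$ entries of $B$ interact directly with the connection forms $\omega^l_k$; these contributions must be separated cleanly from those producing (\ref{4.2})--(\ref{4.4}).
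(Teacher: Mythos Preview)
Your approach is essentially the same as the paper's: both derive all seven identities from the Codazzi equation $(\nabla^{\perp}_XB)(Y,Z)=(\nabla^{\perp}_YB)(X,Z)$, expanded against the $H$-umbilical form~(\ref{4.1}) and the connection forms. The paper is slightly more economical, using only three triples --- $(e_j,e_1,e_1)$ for (\ref{4.2})--(\ref{4.4}), $(e_1,e_j,e_j)$ for (\ref{4.5})--(\ref{4.6}), and $(e_i,e_j,e_j)$ with $i\neq j>1$ for (\ref{4.7})--(\ref{4.8}) --- whereas you invoke additional triples such as $(e_i,e_1,e_j)$; also, your attribution of (\ref{4.6}) to the $Je_j$ component of $(e_1,e_j,e_j)$ is not quite right (that component yields $e_j\mu=3\mu\,\omega^j_1(e_1)$, and one needs the $Je_i$ component with $i\neq 1,j$ to isolate $\mu\,\omega^i_1(e_1)=0$), but these are exactly the bookkeeping subtleties you already flag, and the plan is correct.
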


\vspace{5pt}

\begin{proof}
By $(\nabla^{\perp}_{e_j}B)(e_1,e_1)=(\nabla^{\perp}_{e_1}B)(e_j,e_1)$ and $(\ref{4.1})$, we obtain $(\ref{4.2})-(\ref{4.4})$.\\
By $(\nabla^{\perp}_{e_1}B)(e_j,e_j)=(\nabla^{\perp}_{e_j}B)(e_1,e_j)$ and $(\ref{4.1})$, we obtain $(\ref{4.5})$ and $(\ref{4.6})$.\\
By $(\nabla^{\perp}_{e_i}B)(e_j,e_j)=(\nabla^{\perp}_{e_j}B)(e_i,e_i)$, $(i \ne j>1)$, and $(\ref{4.1})$, we obtain $(\ref{4.7})$ and $(\ref{4.8})$.
\end{proof}

Then, B. Y. Chen also showed the following:

\vspace{5pt}

\begin{thm}[\cite{byc4}]\label{cby}
Let $\p:M^m\rightarrow {\mathbb CP}^m(4)$ be a Lagrangian $H$-umbilical immersion with $m\geq 3$. If $M^m$
 contains no open subsets of constant sectional curvature $\geq 1$, then, up to rigid motions of $\mathbb{CP}^m(4)$, 
$\p$ is congruent to the immersion given the following:
 
 Let $z=(z_1,z_2):I\rightarrow S^3(1)\subset \mathbb{C}^2$ be an unit speed Legendre curve satisfying
 \begin{equation}\label{z}
 z''(x)=i\lambda(x)z'(x)-z(x),
 \end{equation}
 for some function $\lambda(x),$ where $I$ is an open interval of $\mathbb{R}$ or a circle.
  Assume that $|z_2(x)|$ is a positive function. Let $I\times _{|z_2(x)|}S^{m-1}(1)$ be a Riemannian manifold endowed with the metric
   $g=dx^2+|z_2|^2 g_0$, where $g_0$ is the standard metric on $S^{m-1}(1)$. Then $\p:I\times _{|Z_2(x)|}S^{m-1}(1)\rightarrow \mathbb{CP}^m(4)$,
   given by
\begin{equation}\label{4.10}
\p(x,y_1,\cdots ,y_m)=\pi (z_1(x), z_2(x)y_1, \cdots , z_2(x)y_m),
\end{equation}
with ${y_1}^2+\cdots +{y_m}^2=1$, defines a Lagrangian $H$-umbilical immersion with respect to an orthonormal local frame field 
 $e_1, \cdots ,e_m$ with $e_1=\frac{\partial}{\partial x}$, where $\mu =\frac{1}{|z_2(x)|^2}Re(i z_2\bar z_2'),$
 $\pi :S^{2m+1}(1)\rightarrow \mathbb{CP}^m(4)$ is the Hopf fibration is defined by 
 $$z\rightarrow z\mathbb{C}.$$
\end{thm}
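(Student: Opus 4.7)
The plan is to exploit the connection-coefficient relations in Lemma~\ref{lem2} to extract a warped-product structure on $M^m$, then lift the immersion through the Hopf fibration $\pi\colon S^{2m+1}\to\mathbb{CP}^m(4)$ and reduce the classification to an ODE for a Legendre curve in $S^3$. Working on the open set where $\mu\neq 0$, equations (\ref{4.6})--(\ref{4.8}) force $\omega^j_1(e_1)=0$ and $\omega^i_1(e_j)=0$ for $i\neq j>1$, while (\ref{4.7}) provides a single function $\eta$ with $\omega^j_1(e_j)=\eta$ for every $j=2,\ldots,m$. Equivalently, $\nabla_{e_1}e_1=0$ and $\nabla_{e_j}e_1=\eta\,e_j$ for $j>1$, so the $e_1$-trajectories are geodesics and the distribution $\mathcal D=\mathrm{span}(e_2,\ldots,e_m)$ is integrable with totally umbilical leaves. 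Hiepko's theorem then yields a local isometry $M^m\cong I\times_f L^{m-1}$ in which $I$ is parametrized by arc length $x$ along an $e_1$-trajectory, $f=f(x)$ is the warping function, and $\eta=f'/f$.

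Next, equation (\ref{4.5}) gives $\mu=\mu(x)$, and the Gauss equation for $L\subset M\subset\mathbb{CP}^m(4)$, combined with the $H$-umbilical normal form (\ref{4.1}) and the explicit curvature of $\mathbb{CP}^m(4)$, shows that each leaf has constant positive sectional curvature, hence is locally a round sphere. The hypothesis that $M^m$ contains no open subset of constant curvature $\geq 1$ rules out the degenerate locus $\mu\equiv 0$ (which would by the same Gauss calculation make $M$ itself of constant curvature $1$); one may therefore identify $L^{m-1}$ with the standard $(S^{m-1}(1),g_0)$ and set $f(x)=|z_2(x)|$ for a positive function $z_2$ to be determined.

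For the final step I pass to a Legendrian lift $\tilde\p\colon M^m\to S^{2m+1}\subset\mathbb C^{m+1}$ of $\p$ and use the warped-product structure to write
\begin{equation*}
\tilde\p(x,y_1,\ldots,y_m)=\bigl(z_1(x),\,z_2(x)y_1,\ldots,z_2(x)y_m\bigr),\qquad y_1^2+\cdots+y_m^2=1,
\end{equation*}
for some smooth curve $z=(z_1,z_2)\colon I\to S^3(1)\subset\mathbb C^2$. Substituting into (\ref{Leg}) and matching tangential and normal components against the $H$-umbilical shape (\ref{4.1}) produces in one direct calculation the unit-speed condition $\|z'\|=1$, the Legendrean relation $\mathrm{Im}\langle z,z'\rangle=0$, the identification $\mu=|z_2|^{-2}\mathrm{Re}(iz_2\bar z_2')$, and the second-order ODE (\ref{z}) with the prescribed function $\lambda$. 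Reversing the computation shows that every such $z$ produces, via (\ref{4.10}), a Lagrangian $H$-umbilical immersion, and the uniqueness of the Legendrian lift modulo $U(m+1)$ gives the congruence modulo rigid motions of $\mathbb{CP}^m(4)$. The main obstacle I anticipate is this rigidity step -- extracting (\ref{z}) from the warped-product data $(\mu(x),\eta(x))$ and verifying the compatibility of the resulting ODE system for $(z_1,z_2)$ with the Legendre and unit-sphere constraints -- together with the analytic continuation needed to extend the local description across the (possibly empty) locus $\mu=0$ by means of equations (\ref{4.2})--(\ref{4.4}).
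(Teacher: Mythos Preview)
This theorem is not proved in the paper. It is quoted verbatim from Chen's paper \cite{byc4} and used as a black box in the proof of Theorem~\ref{blhsc}; the present paper supplies no argument for it, so there is nothing here to compare your proposal against.

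For what it is worth, your outline is broadly in the spirit of Chen's original argument in \cite{byc4}: one first extracts a warped-product structure $I\times_f L^{m-1}$ from the connection identities of Lemma~\ref{lem2}, identifies the leaves as round spheres via the Gauss equation, and then analyzes the Legendrian lift. The step where you write the lift directly as $\tilde\p(x,y)=(z_1(x),z_2(x)y_1,\ldots,z_2(x)y_m)$ is, however, the crux rather than an ansatz one may simply posit: deducing that the lift separates in this way from the warped-product data and the Legendrian constraint is the substantive part of Chen's proof, and your sketch takes it for granted. Likewise, the passage from the curvature hypothesis to the exclusion of the locus $\mu=0$ requires more than a pointwise Gauss computation, since the hypothesis concerns open subsets rather than points. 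If you intend to supply a self-contained proof, those two items are where the real work lies; but for the purposes of this paper the result is simply cited.
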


\vspace{5pt}

Due to this theorem, we shall classify biharmonic PNMC Lagrangian $H$-umbilical submanifolds in complex space forms.
We first give necessary and sufficient conditions for Lagrangian $H$-umbilical submanifolds in complex space forms to be biharmonic.

\begin{lem}\label{nsbih}
Let $M^m$ be a Lagrangian $H$-umbilical submanifold in a complex space form $(N^m(4\epsilon), J, \langle \cdot , \cdot \rangle)$. Then, $M^m$ is biharmonic if and only if 
\begin{align}\label{1}
2\,\lambda \,(e_1 a)+a\,(e_1 \lambda) +\lambda \,a \sum^m_{l=2}\omega^l_1(e_l)=0,
\end{align}
\begin{align}\label{2}
2\,\mu (e_j a)+a\,\lambda \,\omega^j_1(e_1)=0,\ \ j>1,
\end{align}
\begin{align}\label{5.3}
&-\sum^m_{i=1} e_i (e_i a) +a\sum^m_{i,j=1}\omega^j_1(e_i)^2
+\sum^m_{i,j=1}\left(e_j a\right)\omega^j_i(e_i)\notag\\
&\hspace{60pt}+a\left\{\lambda^2+(m-1)\mu^2-\epsilon (m+3)\right\}=0,
\end{align}
\begin{align}\label{5.4}
&-2\sum^m_{i=1}(e_i a)\omega^j_1(e_i)
-a \sum^m_{i=1}e_i\left(\omega^j_1(e_i)\right)\notag\\
&\hspace{20pt}-a\sum^m_{i,l=1}\omega^l_1(e_i)\omega^j_l(e_i)+a\sum^m_{i,l=1}\omega^l_i(e_i)\omega^j_1(e_l)=0.
\end{align}
\end{lem}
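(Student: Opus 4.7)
The plan is to substitute the $H$-umbilical data into the two equations of Proposition~\ref{n-sbihcsf} and split each resulting identity along the tangent frame $\{e_1,\ldots,e_m\}$ for the tangential part and along the Lagrangian image $\{Je_1,\ldots,Je_m\}$ for the normal part. Each of $(\ref{1})$--$(\ref{5.4})$ should then emerge as one component of this four-way splitting.

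First I would record the required linear-algebraic data. From $\langle A_\xi X,Y\rangle=\langle B(X,Y),\xi\rangle$ together with ${\bf H}=aJe_1$ and $(\ref{4.1})$, the shape operator is diagonal: $A_{{\bf H}}e_1=a\lambda e_1$ and $A_{{\bf H}}e_j=a\mu e_j$ for $j>1$. The same principle gives $A_{Je_1}={\rm diag}(\lambda,\mu,\ldots,\mu)$, while for $k>1$ one has $A_{Je_k}e_1=\mu e_k$ and $A_{Je_k}e_i=\mu\delta_{ik}e_1$ for $i>1$. The K\"ahler identity $\nabla^{\perp}_XJY=J\nabla_XY$ then yields
\begin{equation*}
\nabla^{\perp}_{e_i}{\bf H}=(e_ia)Je_1+a\sum_{l=1}^m\omega^l_1(e_i)Je_l.
\end{equation*}

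For the tangential equation of Proposition~\ref{n-sbihcsf}, I would expand $(\nabla_{e_i}A_{{\bf H}})(e_i)=\nabla_{e_i}(A_{{\bf H}}e_i)-A_{{\bf H}}(\nabla_{e_i}e_i)$ and $A_{\nabla^{\perp}_{e_i}{\bf H}}e_i$ with the data above, sum over $i$, and take inner products against $e_1$ and against $e_j$ with $j>1$. The $e_1$-component, after using $\omega^1_1=0$ and rewriting $e_1(a\lambda)+\lambda(e_1a)$ as $2\lambda(e_1a)+a(e_1\lambda)$, gives exactly $(\ref{1})$; the $e_j$-component collapses to $(\ref{2})$ once $(\ref{4.5})$ of Lemma~\ref{lem2} is used to replace $e_l(a\mu)$ by $\mu(e_la)$. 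For the normal equation, diagonality of $A_{{\bf H}}$ yields ${\rm trace}_g B(A_{{\bf H}}(\cdot),\cdot)=a(\lambda^2+(m-1)\mu^2)Je_1$, and with $(m+3)\epsilon{\bf H}=(m+3)\epsilon aJe_1$ this accounts for the bracket $a[\lambda^2+(m-1)\mu^2-(m+3)\epsilon]$ in $(\ref{5.3})$. The remaining terms of $(\ref{5.3})$ and all of $(\ref{5.4})$ come from projecting $\lapla^{\perp}{\bf H}=-\sum_i\bigl(\nabla^{\perp}_{e_i}\nabla^{\perp}_{e_i}{\bf H}-\nabla^{\perp}_{\nabla_{e_i}e_i}{\bf H}\bigr)$ onto $Je_1$ and onto each $Je_j$, $j>1$.

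The main obstacle is the combinatorial bookkeeping inside $\lapla^{\perp}{\bf H}$. A second application of $\nabla^{\perp}_{e_i}$ to the displayed formula, again via $\nabla^{\perp}J=J\nabla$, produces terms of the shapes $e_i(e_ia)\,Je_1$, $(e_ia)\,\omega^l_1(e_i)\,Je_l$, $a\,e_i[\omega^l_1(e_i)]\,Je_l$, and $a\,\omega^l_1(e_i)\omega^p_l(e_i)\,Je_p$, offset by the contribution $\sum_l\omega^l_i(e_i)\bigl[(e_la)Je_1+a\sum_p\omega^p_1(e_l)Je_p\bigr]$ from $\nabla^{\perp}_{\nabla_{e_i}e_i}{\bf H}$. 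On the $Je_1$-axis, the skew-symmetry $\omega^1_l=-\omega^l_1$ turns $-a\sum_l\omega^l_1(e_i)\omega^1_l(e_i)$ into $+a\sum_l\omega^l_1(e_i)^2$, and summation over $i$ reproduces the three non-constant terms of $(\ref{5.3})$; on the $Je_j$-axis, the analogous re-indexing of $\sum_{i,l}\omega^l_1(e_i)\omega^j_l(e_i)$ and $\sum_{i,l}\omega^l_i(e_i)\omega^j_1(e_l)$ reproduces $(\ref{5.4})$. Beyond this expansion, no ingredient other than $(\ref{4.1})$, Lemma~\ref{lem2}, and Proposition~\ref{n-sbihcsf} is needed.
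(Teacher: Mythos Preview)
Your proposal is correct and follows essentially the same route as the paper: both start from Proposition~\ref{n-sbihcsf}, plug in the $H$-umbilical data~$(\ref{4.1})$ for $A$, $B$ and ${\bf H}=aJe_1$, expand $\lapla^{\perp}{\bf H}$ via $\nabla^{\perp}J=J\nabla$, and then read off the $e_1$-, $e_j$-, $Je_1$-, $Je_j$-components using the identities of Lemma~\ref{lem2}. The only minor addition is that, besides $(\ref{4.5})$, the simplification of the $e_j$-component also needs $(\ref{4.6})$ (to kill the $a\mu\,\omega^j_1(e_1)$ contribution), which is covered by your blanket invocation of Lemma~\ref{lem2}.
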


\vspace{5pt}

\begin{proof} 
We shall calculate the tangential part $(\ref{tan})$. Using Lemma $\ref{lem2}$, we have
\begin{equation}\label{tan1}
\begin{aligned}
{\rm trace}_g\left(A_{\nabla^{\perp}_{\cdot}}(\cdot)\right)
=&\sum^m_{i=1}A_{\nabla^{\perp}_{e_i}a J e_1}e_i\\
=&\sum^m_{i=1}(e_i a)A_{J e_1}e_i+a\sum^m_{i,l=1}\omega^l_1(e_i)A_{Je_l}e_i\\
=&\lambda \sum^m_{i=1}(e_i a)e_1+\mu\sum^m_{i=2}(e_i a)e_i\\
&+a\mu\sum^m_{l=2}\omega^l_{1}(e_1)e_l
+a\mu \sum^m_{l=2}\omega^l_1(e_l)e_1\\
=&\lambda \sum^m_{i=1}(e_i a)e_1+\mu\sum^m_{i=2}(e_i a) e_i
+a\mu \sum^m_{l=2}\omega^l_1(e_l)e_1,
\end{aligned}
\end{equation}
and 
\begin{equation}\label{tan2}
\begin{aligned}
{\rm trace}_g\left(\nabla A_{\bf H}\right)
=&\sum^m_{i=1}\nabla_{e_i}\left( A_{{\bf H}e_i}\right)-\sum^m_{i=1}A_{\bf H}\left( \nabla_{e_i}e_i\right)\\
=&\sum^m_{i=1}\nabla_{e_i}A_{a J e_1} e_i
-a\sum^m_{i,l=1}A_{Je_1}\left(\omega^l_{i}(e_i)e_l\right)\\
=&\sum^m_{i=1}\left\{(e_i a)A_{J e_1}e_i+a\nabla_{e_i}A_{J e_1}e_i\right\}
-a\sum^m_{i,l=1}\omega^l_i(e_i)A_{Je_1}e_l\\
=&\lambda (e_1 a)e_1+a(e_1\lambda) e_1+a\lambda \sum^m_{l=1}\omega^l_1(e_1)e_l\\
&+\sum^m_{i=2}\left\{\mu (e_i a)e_i+a(e_i \mu )e_i+ a\mu \sum^m_{l=1}\omega^l_i(e_i)e_l\right\}\\
&-a\lambda\sum^m_{l=1}\omega^1_l(e_l)e_1-a\mu\sum^m_{l=1}\sum^m_{i=2}\omega^i_l(e_l)e_i.
\end{aligned}
\end{equation}
By $(\ref{tan1})$ and $(\ref{tan2})$, we have obtain
\begin{equation}
\begin{aligned}
{\rm trace}_g\left(\nabla A_{\bf H}\right)+{\rm trace}_g\left(A_{\nabla^{\perp}_{\cdot}}(\cdot)\right)
=&\left\{2\lambda(e_1 a)+a(e_1 \lambda)+a\lambda\sum^m_{l=2}\omega^l_1(e_l)\right\}e_1\\
&+\sum^m_{j=2}\left\{2\mu (e_j a)+a\lambda \omega^j_1(e_1)\right\}e_j,
\end{aligned}
\end{equation}
 which yields $(\ref{1})$ and $(\ref{2})$.
 
Next, we shall calculate the normal part $(\ref{nor})$. 
 Using Lemma \ref{lem2}, we have
\begin{equation}\label{nor1}
\begin{aligned}
\lapla^{\perp}{\bf H}
=&-\sum^m_{i=1}\nabla^{\perp}_{e_i}\nabla^{\perp}_{e_i}(aJe_1)+\sum^m_{i=1}\nabla^{\perp}_{\nabla_{e_i}e_i}(aJe_1)\\
=&-\sum^m_{i=1}(e_ie_i a)Je_1
-\sum^m_{i,j=1}\left\{2(e_i a)\omega^j_1(e_i)Je_j+ae_i\left(\omega^j_1(e_j)Je_l\right)\right\}\\
&-a\sum^m_{i,j,l=1}\omega^j_1(e_i)\omega^l_j(e_i)Je_l
+\sum^m_{i,j=1}\omega^j_1(e_i)(e_j a)Je_1\\
&+a\sum^m_{i,j,l=1}\omega^j_i(e_i)\omega^l_1(e_j)Je_l,
\end{aligned}
\end{equation}
and 
\begin{align}\label{nor2}
{\rm trace}_g B\left(A_{\bf H}(\cdot),\cdot\right)
=a\left\{\lambda^2+(m-1)\mu^2\right\}Je_1.
\end{align}
By $(\ref{nor1})$ and $(\ref{nor2})$, we obtain
\begin{align*}
&\lapla^{\perp}{\bf H}+{\rm trace}_g B\left(A_{\bf H}(\cdot),\cdot\right)-(m+3)\epsilon {\bf H}\\
&=\Big\{-\sum^m_{i=1} e_i e_i a +a\sum^m_{i,j=1}\omega^j_1(e_i)^2
+\sum^m_{i,j=1}(e_j a)\omega^j_i(e_i)\\
&\hspace{20pt}+a\left\{\lambda^2+(m-1)\mu^2-\epsilon (m+3)\right\}\Big\}Je_1\\
&+\sum^m_{j=2}\Big\{-2\sum^m_{i=1}(e_i a)\omega^j_1(e_i)
-a \sum^m_{i=1}e_i\left(\omega^j_1(e_i)\right)\\
&\hspace{30pt}-a\sum^m_{i,l=1}\omega^l_1(e_i)\omega^j_l(e_i)
+a\sum^m_{i,l=1}\omega^l_i(e_i)\omega^j_1(e_l)\Big\}Je_j,
\end{align*}
which yields $(\ref{5.3})$ and $(\ref{5.4})$.
\end{proof}

\vspace{5pt}


From Lemma \ref{nsbih}, we obtain the following.
\vspace{5pt}

\begin{lem}\label{main}
Let $M^m$ be a biharmonic Lagrangian $H$-umbilical submanifold in a complex space form $(N^m(4\epsilon), J, \langle \cdot , \cdot \rangle)$. Then, we have the
following equations:
\begin{equation}\label{1'}
2\lambda\left(e_1 a\right)+a\left(e_1 \lambda\right)+a\lambda(m-1)k=0,
\end{equation}
\begin{equation}\label{2'}
e_j a=0,\ \ \ j>1,
\end{equation}
\begin{equation}\label{5.3'}
-e_1(e_1 a)+a(m-1)k^2-\left(e_1 a\right)(m-1)k+a\left\{\lambda^2+(m-1)\mu^2-\epsilon (m+3)\right\}=0,
\end{equation}
\begin{equation}\label{5.4'}
e_j k=0,\ \ \ j>1,
\end{equation}
where, $k=\omega^2_1(e_2)=\cdots =\omega^m_1(e_m)$.
\end{lem}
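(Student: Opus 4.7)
The plan is to take the four biharmonic equations $(\ref{1})$, $(\ref{2})$, $(\ref{5.3})$, $(\ref{5.4})$ supplied by Lemma~\ref{nsbih} and substitute into them the explicit form of the connection coefficients $\omega^l_1$ delivered by Lemma~\ref{lem2}. I would work on the open subset where $\mu \neq 0$ (the PNMC setting of this section also ensures $a \neq 0$), so that $(\ref{4.6})$--$(\ref{4.8})$ can be divided through by $\mu$. This yields $\omega^j_1(e_1) = 0$ for $j \geq 2$, $\omega^i_1(e_j) = 0$ for distinct $i, j \geq 2$, and a common value $\omega^2_1(e_2) = \cdots = \omega^m_1(e_m) =: k$. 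The antisymmetry $\omega^l_j + \omega^j_l = 0$ of the connection forms then forces $\omega^1_1 \equiv 0$ and $\omega^1_i(e_i) = -k$ for $i \geq 2$.

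With this dictionary in hand, $(\ref{1})$ translates immediately into $(\ref{1'})$ via $\sum_{l=2}^m \omega^l_1(e_l) = (m-1)k$, while $(\ref{2})$ reduces to $2\mu(e_j a) = 0$, giving $(\ref{2'})$ upon dividing by $\mu$. These two facts already streamline $(\ref{5.3})$: the relation $e_j a = 0$ for $j \geq 2$ collapses $\sum_i e_i(e_i a)$ to $e_1(e_1 a)$; the quadratic term $\sum_{i,j}\omega^j_1(e_i)^2$ reduces to $(m-1)k^2$; and in the mixed sum $\sum_{i,j}(e_j a)\omega^j_i(e_i)$ only the $j=1$ contribution survives, which by antisymmetry equals $-(m-1)k(e_1 a)$. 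Assembling these pieces yields $(\ref{5.3'})$.

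The step I expect to be the most delicate is deriving $(\ref{5.4'})$, where for each fixed $j \geq 2$ the four double sums of $(\ref{5.4})$ must conspire into a clean statement. The first sum vanishes term by term, because every summand demands either $\omega^j_1(e_1) = 0$ or $e_i a = 0$. The second sum collapses to $-a(e_j k)$ using $\omega^j_1(e_i) = \delta_{ij} k$ for $i \geq 2$. The third is supported on $i = l \geq 2$, giving $-ak\sum_{l \geq 2}\omega^j_l(e_l)$, and the fourth is supported on $l = j$, giving $ak\sum_i \omega^j_i(e_i) = ak\sum_{i \geq 2}\omega^j_i(e_i)$ after $\omega^j_1(e_1) = 0$ kills the $i=1$ term. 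The main obstacle is exactly this index bookkeeping: once one verifies that the third and fourth sums cancel identically, only $-a(e_j k) = 0$ remains, and since $a \neq 0$ under the PNMC hypothesis we conclude $(\ref{5.4'})$.
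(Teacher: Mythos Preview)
Your reduction of the four equations is correct and matches the paper's computation almost line for line; in particular your bookkeeping for $(\ref{5.4})$ is exactly right, and the third and fourth double sums do cancel to leave $-a(e_jk)=0$.

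There is, however, one genuine gap. You \emph{assume} $\mu\neq 0$ by restricting to an open subset, but the lemma as stated has no such restriction (and PNMC is not among its hypotheses, so you cannot import that either). The paper does not assume this: its first step is to \emph{prove} that $\mu$ cannot vanish. The argument runs as follows. Suppose $\mu=0$ on an open set; then $a=\lambda/m$, and since one is interested in the non-minimal case $a\neq 0$, also $\lambda\neq 0$. Now $(\ref{4.4})$ with $\lambda-2\mu=\lambda\neq 0$ gives $\omega^i_1(e_j)=0$ for all $j\geq 2$. Plugging this into $(\ref{1})$ forces $e_1a=0$, and then $(\ref{2})$ (with $a\lambda\neq 0$) gives $\omega^j_1(e_1)=0$ as well, so in fact $\omega^l_1(e_i)=0$ for all $i,l$. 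Hence $\langle R(e_1,e_i)e_i,e_1\rangle=0$, and the Gauss equation together with $(\ref{4.1})$ yields $\epsilon=0$. But then $(\ref{4.2})$ and $(\ref{4.5})$ give $e_ja=0$ for all $j$, and $(\ref{5.3})$ collapses to $a\lambda^2=0$, contradicting $a\neq 0$. This contradiction is what legitimises dividing $(\ref{4.6})$--$(\ref{4.8})$ by $\mu$ globally, and hence the very definition of the common value $k$ in the lemma's statement. You should supply this step rather than sidestep it.
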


\vspace{5pt}

\begin{proof}
First, we shall show $\mu \neq0$.
Assume that $\mu=0,$ then $a=\frac{1}{m}\lambda\neq 0$.
By Lemma {\ref{lem2}}, we have
\begin{equation}\label{3}
\omega^i_1(e_j)=0,\ \ j=2,\cdots, m.
\end{equation}
From $(\ref{1})$, $e_1 a=0$.
Then, from $(\ref{2})$, we obtain 
\begin{equation}\label{omega1}
\omega^j_1(e_1)=0,\ \ j=1,\cdots,m.
\end{equation}
Combining  $(\ref{3})$ and $(\ref{omega1})$, we have
\begin{equation}\label{omega2}
\omega^i_1(e_j)=0,\ \  i,j=1,\cdots,m .
\end{equation}
It follows that $\langle R(e_1,e_i)e_i, e_1\rangle=0.$ Thus, by $(\ref{GCeq})$, we have $\epsilon=0.$
By $(\ref{4.2})$, we have $e_j a=0,\ (j>1).$
From this and $(\ref{5.3})$, we obtain $a=0$, which contradicts the assumption.

 Thus, we only have to consider the case of $\mu \neq 0$. Then, we have
 \begin{align}
 \omega^i_1(e_j)=0,\ \ i\neq j,\label{5.12}\\
 \omega^2_1(e_2)=\cdots =\omega^m_1(e_m)\label{5.13}.
 \end{align} 
We put $k=\omega^2_1(e_2)=\cdots =\omega^m_1(e_m)$.

By $(\ref{5.13})$, we can denote that the equation $(\ref{1})$ is $(\ref{1'})$.
Putting $(\ref{5.12})$ into $(\ref{2})$, we obtain $(\ref{2'})$.
From $(\ref{2'})$ and $(\ref{5.3})$, we have $(\ref{5.3'})$.
Putting $(\ref{2'})$ into $(\ref{5.4})$, we have
$$-a \sum^m_{i=1}e_i\left(\omega^j_1(e_i)\right)
-a\sum^m_{i,l=1}\omega^l_1(e_i)\omega^j_l(e_i)
+a\sum^m_{i,l=1}\omega^l_i(e_i)\omega^j_1(e_l)=0.
$$
Thus, form $(\ref{5.12})$ we obtain $(\ref{5.4'})$.
\end{proof}

\vspace{10pt}

Using these results, we shall classify all the biharmonic PNMC Lagrangian $H$-umbilical submanifolds in complex space forms.






\vspace{10pt}


 \begin{thm}\label{blhsc}
 Let $\p:M^m\rightarrow (N^m(4\epsilon), J,\langle \cdot , \cdot \rangle)$ be a Lagrangian $H$-umbilical isometric immersion into a complex space form which has PNMC.
  Then, $\p$ is biharmonic if and only if $\epsilon =1$ and $\p(M)$ is congruent to 
 \begin{align}\label{5.2}
  \pi \left( \sqrt{\frac{\mu^2}{\mu^2+1}}e^{-\frac{i}{\mu}x}, \sqrt{\frac{1}{\mu^2+1}}e^{i\mu x}y_1, \cdots ,\sqrt{\frac{1}{\mu^2+1}}e^{i\mu x}y_m \right) \subset \mathbb{CP}^m(4), 
   \end{align}
  where $x$, $y_1,\cdots, y_m$ run through real numbers satisfying ${y_1}^2+\cdots +{y_m}^2=1$.
  Here, $\mu=\pm\sqrt{\frac{m+5\pm \sqrt{m^2+6m+25}}{2m}}$.
 \end{thm}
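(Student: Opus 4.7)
The plan is to exploit the PNMC assumption to reduce the biharmonic equations of Lemma~\ref{main} to a purely algebraic system in the two curvature functions $\lambda$ and $\mu$, and then apply Chen's classification Theorem~\ref{cby} to identify the generating Legendre curve explicitly.

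First I would unpack PNMC. Writing ${\bf H}=a\,Je_1$ with $a=(\lambda+(m-1)\mu)/m$, the Lagrangian identity $\nabla^{\perp}_X JY=J\nabla_X Y$ converts $\nabla^{\perp}({\bf H}/|{\bf H}|)=0$ into $\nabla_X e_1=0$ for every $X\in\frak{X}(M)$, so every connection form $\omega^j_1$ vanishes identically, and in particular $k=0$. Equations $(\ref{4.2})$, $(\ref{4.3})$, $(\ref{4.5})$ of Lemma~\ref{lem2} then force $e_j\lambda=0$ for $j>1$ and $e_1\mu=e_j\mu=0$, so $\mu$ is a constant and $\lambda$ depends only on the $e_1$-direction parameter $x$. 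Substituting $k=0$ together with $e_1 a=\lambda'/m$ into $(\ref{1'})$ and $(\ref{5.3'})$, the first equation collapses to $\lambda'\bigl(3\lambda+(m-1)\mu\bigr)=0$, which in either branch forces $\lambda$ (and hence $a$) to be constant, while the second reduces to the algebraic identity $\lambda^2+(m-1)\mu^2=\epsilon(m+3)$. Since $\mu\neq 0$ (argued just as in the proof of Lemma~\ref{nsbih}), the left hand side is strictly positive, hence $\epsilon>0$, and after the standard rescaling we may take $\epsilon=1$.

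Next I would apply Theorem~\ref{cby}, having first verified its hypothesis via the Gauss equation: the sectional curvatures $K(e_1,e_j)=1+\mu\lambda-\mu^2$ and $K(e_i,e_j)=1+\mu^2$ can coincide at a common value $\geq 1$ only if $\lambda=2\mu$, which contradicts the relation $\lambda=\mu-1/\mu$ derived just below. Chen's theorem then expresses $\p$ in the form $(\ref{4.10})$ for a unit-speed Legendre curve $z=(z_1,z_2)\colon I\to S^3(1)$ satisfying $z''=i\lambda z'-z$; because $\lambda$ is constant, the characteristic roots $r_{1,2}=(\lambda\pm\sqrt{\lambda^2+4})/2$ of this constant-coefficient ODE satisfy $r_1 r_2=-1$. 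Constancy of $\mu=|z_2|^{-2}\,{\rm Re}(iz_2\bar z_2')$ is equivalent to constancy of $|z_2|^2$, and combined with $|z_1|^2+|z_2|^2=1$ this forces each of $z_1,z_2$ to consist of a single exponential mode; a short computation then gives $z_1(x)=\sqrt{\mu^2/(\mu^2+1)}\,e^{-ix/\mu}$, $z_2(x)=\sqrt{1/(\mu^2+1)}\,e^{i\mu x}$ and the identity $\lambda=\mu-1/\mu$. Substituting this into $\lambda^2+(m-1)\mu^2=m+3$ yields the quartic $m\mu^4-(m+5)\mu^2+1=0$, whose roots give the stated values of $\mu$. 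The converse direction is a direct verification that the explicit immersion $(\ref{5.2})$ is PNMC Lagrangian $H$-umbilical with those $\lambda,\mu$, which make it biharmonic by Lemma~\ref{main}.

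The main obstacle I expect is the final integration step: ruling out mixed exponential modes in $z_1$ and $z_2$ via the constancy of $|z_2|^2$ and the Legendre constraint, and simultaneously confirming that the hypothesis of Chen's theorem (no open subset of constant sectional curvature $\geq 1$) holds for every biharmonic PNMC solution. Both rest on the rigidity forced by PNMC, so once the connection forms $\omega^j_1$ have been killed the remainder of the argument is an exercise in the two-variable algebra of $\lambda,\mu$.
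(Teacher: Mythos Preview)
Your overall strategy---PNMC kills the connection forms $\omega^j_1$, the biharmonic system collapses to algebra in $\lambda,\mu$, and Chen's theorem identifies the Legendre curve---is exactly the paper's.  But you overlook the step that makes the paper's argument clean and leaves a genuine gap in yours.

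Once you have $\nabla_X e_1=0$ for all $X$, the intrinsic curvature satisfies $R(e_j,e_1)e_1=0$, so $K_M(e_1,e_j)=0$.  You actually compute this sectional curvature via Gauss as $1+\lambda\mu-\mu^2$ while checking Chen's hypothesis, yet never set it equal to zero.  Doing so gives $\mu^2-\lambda\mu=\epsilon$ immediately (the paper's $(\ref{5.16})$), hence $\lambda=(\mu^2-\epsilon)/\mu$ before you ever touch the Legendre ODE, and the hypothesis of Theorem~\ref{cby} is then trivial since $K_M(e_1,e_j)=0<1$ on all of $M$.  Your detour through $(\ref{1'})$ to get $\lambda$ constant is correct but unnecessary, and your forward reference to ``the relation $\lambda=\mu-1/\mu$ derived just below'' becomes superfluous.

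The real gap is your claim that constancy of $\mu=|z_2|^{-2}\,{\rm Re}(iz_2\bar z_2')$ is equivalent to constancy of $|z_2|^2$.  Writing $z_2=ae^{ir_1x}+be^{ir_2x}$ and expanding, one finds that $\mu$ can be constant with $a,b$ both nonzero provided $|a|=|b|$ and $\mu=(r_1+r_2)/2=\lambda/2$; ruling this out requires either the full Legendre constraints or the observation you already have at hand: in the warped metric $g=dx^2+|z_2|^2g_0$ the coefficient $k=\omega^j_1(e_j)$ equals $|z_2|'/|z_2|$, and you have shown $k=0$.  That gives $|z_2|$ constant directly.  Finally, Theorem~\ref{cby} requires $m\geq 3$; the paper treats $m=2$ separately via a local coordinate argument and an explicit ODE system, which your proposal omits.
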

 
 \vspace{5pt}
 
 \begin{proof}
  By the assumption 
 $$\nabla^{\perp} \left(\frac{\bf H}{|\bf H|}\right)=\nabla^{\perp}\left(\frac{aJe_1}{|a|}\right)=0,$$
 and $a\neq 0$, we have
 \begin{align}
J(\nabla e_1)=\nabla^{\perp} Je_1 = 0.
 \end{align}
 Thus, we obtain
 \begin{equation}
 0=\nabla _{e_i} e_1=\sum^m_{l=1}\omega^l_1(e_i)e_l,\ \ \ \ (i=1,\cdots, m).
 \end{equation}
 Therefore, we have
 \begin{equation}\label{o0}
 \omega^l_1(e_i)=0,\ \ \ \ (i,l=1,\cdots ,m).
 \end{equation}
 Especially, we have
 \begin{equation}\label{k0}
 k=\omega^2_1(e_2)=\cdots=\omega^m_1(e_m)=0.
  \end{equation}

 By $(\ref{4.3})$, we obtain
  \begin{align}\label{5.15}
  e_1\mu=0.
 \end{align}
Thus, $\mu$ is constant. Since $\langle R(e_i,e_1)e_1,e_i \rangle=0,$ we have
\begin{align}\label{5.16}
\mu^2-\lambda\mu=\epsilon.
\end{align}
Thus, $\lambda$ is constant.
Therefore, $a=\frac{\lambda+(m-1)\mu}{m}$ is a non-zero constant, and the equation $(\ref{5.3'})$ is
\begin{equation}\label{5.3''}
\lambda^2+(m-1)\mu^2-\epsilon (m+3)=0.
\end{equation}
The equation $(\ref{5.3''})$ implies that $\epsilon >0$.
Using $(\ref{5.16})$ and $(\ref{5.3''})$, we obtain
\begin{align}
\lambda=&\frac{\mu^2-1}{\mu},\label{lambda}\\
\mu=&\pm \sqrt{\frac{m+5\pm \sqrt{m^2+6m+25}}{2m}}\label{mu}.
\end{align}

When $m=2$,  let $\tilde{f}$ be a Legendrian immersion such that $f=\pi \circ \tilde{f}$.
 Using the results of Hiepko (cf. \cite{sh1},~\cite{ts3}), there exists a local coordinate system $(x,y)$ 
 on $M$ such that the metric is given by $g=dx^2+G(x)dy^2$, for some function $G(x)$ with $e_1=\frac{\partial}{\partial x}$, $e_2=G^{-1}\frac{\partial}{\partial y}$.
  By $(\ref{o0})$, we have $G=1$, namely, $g=dx^2+dy^2$.
  From this and $(\ref{Leg})$, we obtain the following differential equations.
 \begin{align}
 \tilde{f}_{xx}=&\sqrt{-1}\lambda\tilde{f}_x-\tilde{f},\label{f1}\\
 \tilde{f}_{xy}=&\sqrt{-1}\mu\tilde{f}_y,\label{f2}\\
 \tilde{f}_{yy}=&\sqrt{-1}\mu\tilde{f}_x-\tilde{f}\label{f3}.
  \end{align}
  By $(\ref{f1})$, $(\ref{f2})$ and $(\ref{f3})$, we obtain

\begin{equation}
\begin{aligned}
\tilde{f}(x,y)=&\left( c_2 \sin{\sqrt{\mu^2+1}y}+c_3\cos{\sqrt{\mu^2+1}y}\right)e^{\sqrt{-1}\mu x}\\
&+c_1\sqrt{-1}\frac{\mu}{\mu^2+1}e^{\sqrt{-1}(\lambda-\mu)x},
\end{aligned}
\end{equation}
for some constant vectors $c_1$, $c_2$ and $c_3$ in $\mathbb{C}^3$. 
Since $\tilde{f}$ is a Legendrian immersion,
 $|c_1|=\sqrt{\mu^2+1}$, $|c_2|=|c_3|\frac{1}{\sqrt{\mu^2+1}}$ and 
 $\langle c_i,c_j\rangle =\langle c_i, \sqrt{-1} c_j \rangle =0,~(i\neq j)$.
 Thus, we can choose $c_1=(-\sqrt{-1}\sqrt{\mu^2+1},0,0)$,
  $c_2=(0,0,\frac{1}{\sqrt{\mu^2+1}})$ and 
  $c_3=(0,\frac{1}{\sqrt{\mu^2+1}},0)$.
 Therefore, we have $(\ref{5.2})$.

 When $m>2$,  
 From $(\ref{GCeq})$, we have that $M$ contains no open subsets of constant sectional curvature bigger than $1$.
 By Theorem $\ref{cby}$, $\p$ is congruent to $(\ref{4.10})$.
  From $(\ref{z})$ and $(\ref{lambda})$, we have $(z_1(x),z_2(x))=\left(\sqrt\frac{\mu^2}{\mu^2+1}e^{-\frac{i}{\mu}x},\sqrt \frac{1}{\mu^2+1}e^{i\mu x}\right)$.

Conversely, by a direct computation, it turns out that the immersion $(\ref{5.2})$ is a biharmonic PNMC Lagrangian immersion.
 \end{proof}
 
 \qquad\\
 \qquad\\
 


\quad\\
\quad\\

\noindent
Shun~MAETA~\\
Division of Mathematics, Graduate School of Information Sciences,\\
 Tohoku University, Sendai 980-8579, Japan\\
e-mail:~maeta@ims.is.tohoku.ac.jp\\

\noindent
Hajime~URAKAWA\\
Institute for International Education Tohoku University,
Sendai 980-8576, Japan\\
e-mail:urakawa@math.is.tohoku.ac.jp


\begin{thebibliography}{99}

\bibitem{absmco1}
A. Balmus, S. Montaldo and C. Oniciuc,
{ \em Biharmonic PNMC submanifolds in spheres}, 
 a preprint.
 
 \bibitem{icfu1}
I. Castro and F. Urbano,
{\em Twistor holomorphic Lagrangian surfaces in complex projective and hyperbolic planes},
Ann. Global. Anal. Geom. {\bf 13} (1995), 59-67.

\bibitem{byc1}
B. Y. Chen,
{\em Some open problems and conjectures on submanifolds of finite type},
Soochow J. Math. {\bf 17} (1991), 169-188.

\bibitem{byc2}
B. Y. Chen ,
{\em Total mean curvature and submanifolds of finite type},
Series in Pure Mathematics, {\bf 1}. World Scientific Publishing Co., Singapore, (1984).

\bibitem{byc3}
B. Y. Chen,
{\em Complex extensors and Lagrangian submanifolds in complex Euclidean spaces},
 Tohoku. Math. J. {\bf 49} (1997), 277-297.
 
\bibitem{byc4}
B. Y. Chen,
{\em Interaction of Legendre curves and Lagrangian submanifolds},
 Israel J. Math. {\bf 99} (1997), 69-108. 
 
\bibitem{byc5}
B. Y. Chen,
{\em Representation of flat Lagrangian H-umbilical submanifolds in complex Euclidean spaces},
Tohoku. Math J. {\bf 51} (1999), 13-21.

\bibitem{byc6}
B.Y. Chen,
{\em Surfaces with parallel normalized mean curvature vector},
 Mh. Math. {\bf 90} (1980), 185-194.
 
\bibitem{ci1}
B. Y. Chen and S. Ishikawa, 
\textit{Biharmonic pseudo-Riemannian submanifolds in pseudo-Euclidean spaces},
 Kyushu J. Math. {\bf 52} (1998), no. 1-3, 101-108.
 
 \bibitem{bycko1}
 B.~Y.~Chen and K.~Ogiue,
 {\em On totally real submanifolds},
 Trans. Amer. Math. Soc. {\bf 193} (1974), 257-266.

\bibitem{id1}
I. Dimitric,
{\em Submanifolds of $E^n$ with harmonic mean curvature vector},
Bull. Inst. Math. Acad. Sinica. {\bf 20} (1992), 53-65.

\bibitem{jell1}
J. Eells and L. Lemaire, 
 {\em Selected topics in harmonic maps},
 CBMS, {\bf 50}, Amer. Math. Soc, (1983).

\bibitem{sh1}
S. Hiepko,
  {\em Eine innere Kennzeichnung der verzerrten Produkte}, 
Math. Ann. {\bf 241} (1979) 209-215.

\bibitem{jg1}
G. Y. Jiang, 
{\em 2-harmonic maps and their first and second variational formulas},
Chinese Ann. Math. {\bf 7A} (1986), 388-402; the English translation, Note di Matematica, {\bf 28} (2009), 209-232.

\bibitem{ts1}
T.~Sasahara,
{\em Biharmonic Lagrangian surfaces of constant mean curvature in complex space forms,}
Glasg. Math. J. {\bf 49} (2007) 497-507.
 
 \bibitem{ts2}
 T.~Sasahara,
 {\em Biminimal Lagrangian H-umbilical submanifolds in complex space forms},
 to appear in Geom. Dedicata. 
 
 \bibitem{ts3}
 T.~Sasahara,
 {\em A classification result for biminimal Lagrangian surfaces in complex space forms},
 J. Geom. Phys. {\bf 60} (2010) 884-895.

\bibitem{hr1}
 H.~Reckziegel,
 {\em Horizontal lights of isometric immersions into the budge space of a pseudo-Riemanian submersion},
 Global differential geometry and global analysis (1984) 264-279, Lecture Notes in Math, {\bf 1156} (1985).

\bibitem{hu1}
H. Urakawa, {\em Calculus of variation and harmonic maps}, Transl. Math. Monograph. Amer. Math. Soc. {\bf 132} (1993).
 


\end{thebibliography}
\end{document}